\documentclass{article}[12pt]

\usepackage{amsmath,amssymb,amscd,amsthm}
\usepackage{latexsym}
\usepackage{amsmath,amsfonts}
\usepackage{dsfont}
\usepackage{enumerate}
\usepackage{cite}
\usepackage{tikz}
\usepackage{hyperref}

\theoremstyle{plain}
\newtheorem{theorem}{Theorem}[section]
\newtheorem{corollary}[theorem]{Corollary}
\newtheorem{proposition}[theorem]{Proposition}

\theoremstyle{definition}

\newtheorem{remark}[theorem]{Remark}
\newtheorem{example}[theorem]{Example}

\newcommand{\I}{I(X,R)}
\newcommand{\FI}{FI(X,R)}
\newcommand{\D}{D(X,R)}
\newcommand{\Der}{\mathcal{D}}
\newcommand{\ADer}{\mathcal{AD}}
\newcommand{\IDer}{\mathcal{ID}}

\DeclareMathOperator{\Hom}{Hom}
\DeclareMathOperator{\End}{End}

\begin{document}

\title{\textbf{Derivations of the idealization of the incidence module over the finitary incidence algebra}}
\author{\'{E}rica Z.~Fornaroli\thanks{Corresponding author: ezancanella@uem.br} \ and Roger E.~M.~Pezzott \\
\\
\textit{{\normalsize Departamento de Matem\'atica, Universidade Estadual de Maring\'a,}} \\ \textit{{\normalsize Avenida Colombo, 5790, Maring\'a, PR, 87020-900, Brazil.}}}
\date{}

\maketitle

\begin{abstract}
Let $R$ be a unital commutative ring and $X$ a partially ordered set. Let $\FI$ and $\I$ be the finitary incidence algebra and the incidence module of $X$ over $R$, respectively, and let $\D=\FI\oplus \I$ be the idealization of the $\FI$-bimodule $\I$. We give a description of the Lie algebra of derivations of $\D$ and also provide some necessary or sufficient conditions on $R$ and $X$ for the description of this Lie algebra to be simpler. \\

\noindent{\bf Keywords:} Finitary incidence algebra; derivation, idealization, trivial extension.

\noindent{\bf 2020 MSC:} 16W25; 16D20; 16S60

\end{abstract}


\section*{Introduction}

Let $X$ be a partially ordered set (poset, for short) and let $R$ be a unital commutative ring. The \emph{incidence module} $\I$ of $X$ over $R$ is the $R$-module of functions $f:X\times X\to R$ such that $f(x,y)=0$ if $x\nleq y$. Consider the submodule $\FI$ formed by all functions $f\in \I$ such that for any $x,y\in X$, $x<y$, there is only a finite number of subintervals $[u,v]\subseteq[x,y]$ such that $u\neq v$ and $f(u,v)\neq 0$. This is an $R$-algebra under the convolution product
		$$(fg)(x,y)=\sum_{x\leq z\leq y}f(x,z)g(z,y),$$
for any $f, g\in \FI$, called the \emph{finitary incidence algebra} of $X$ over $R$. Moreover, $\I$ is an $\FI$-bimodule~\cite{KN}. When every interval of $X$ is finite, $X$ is called locally finite and, in this case, $\FI=\I$ is known as the \emph{incidence algebra} of $X$ over $R$~\cite{Rota}. If $X=\{x_i : i\in I\}$ is a chain with $I=\{1,\ldots, n\}$ $(n\in\mathds N$, $n\geq 1$) or $I=\mathds N$, such that $x_i\leq x_j \Leftrightarrow i\leq j$, then $\FI\cong UT_{|I|}(R)$, the algebra of upper triangular $|I|\times |I|$ matrices over $R$. If $X$ is an anti-chain, then $\FI\cong\prod_{|X|} R$.

Since $\I$ is an $\FI$-bimodule, we can consider the idealization $\D:=\FI\oplus \I$ of $\I$ (also called trivial extension of $\FI$ by $\I$). This idealization has been studied since 2012 in the case where $R$ is a field. Zassenhaus properties of $\D$ were studied by Dugas~\cite{DMH}, automorphisms by Dugas and Wagner~\cite{DW}, and anti-isomorphisms and involutions by the authors~\cite{ER22}.

Derivations of $\FI$ and generalizations were studied in \cite{Backawski,KD,KKW,mK2018,mK2021,Koppinen,ER} and others. Derivations from $\I$ to $\FI$ were described in \cite{DW} in the case where $R$ is a field.

In Section~\ref{sec:pre}, we fix the notation and recall some results that will be used throughout the work. We also give a sufficient condition for all derivations of $\FI$ and all derivations from $\FI$ to $\I$ to be inner (Corollary~\ref{all-comparable-component-der=inner}).

In Section~\ref{sec:derivations}, we describe the $R$-module $\Der(\D)$ of the derivations of $\D$. More precisely, we give a decomposition of $\Der(\D)$ as a direct sum of four $R$-submodules (Theorem~\ref{TeoDecDer}). To obtain such a decomposition, we describe all $\FI$-bimodule endomorphisms of $\I$ (Theorem~\ref{bimodule_hom}). Furthermore, under certain hypotheses on $X$ or $R$, we show that the decomposition of $\Der(\D)$ obtained earlier can be simplified (Corollaries~\ref{simpler}, \ref{derwithallcomparable} and \ref{ecc}).


\section{Preliminaries}\label{sec:pre}

\subsection{Modules and idealization}

We begin by establishing some notation and recalling some results regarding modules and idealization of bimodules.

Let $R$ be a unital commutative ring. Throughout the paper $R$-algebras are associative with unity, and the center of an $R$-algebra $A$ is denoted by $Z(A)$.

Let $A$ be an $R$-algebra and let $M, N$ be left $A$-modules. So $M$ and $N$ are also $R$-modules, and we use the notation $\Hom_R(M,N)=\{\phi: M \to N : \phi$ is $R$-linear$\}$ and $\End_R(M)=\Hom_R(M,M)$. Each $\Phi \in \End_R(M\oplus N)$ has a presentation
$$ \Phi= \begin{bmatrix}
\phi_{11}& \phi_{12}\\
\phi_{21}&  \phi_{22}
\end{bmatrix},$$
where $\phi_{11} \in \End_R(M)$, $\phi_{12} \in \Hom_R(N,M)$, $\phi_{21} \in \Hom_R(M,N)$, and $\phi_{22} \in \End_R(N)$, such that for any $\begin{bmatrix}
m\\n
\end{bmatrix} \in M\oplus N$,
$$ \Phi\left(\begin{bmatrix}
m\\n
\end{bmatrix} \right) = \begin{bmatrix}
\phi_{11}(m) + \phi_{12}(n)\\ \phi_{21}(m) + \phi_{22}(n)
\end{bmatrix}.$$

Given an $R$-algebra $A$ and an $A$-bimodule $M$, the direct sum of $R$-bimodules $A\oplus M$ is an $R$-algebra with multiplication
$$\begin{bmatrix}r\\m \end{bmatrix}\begin{bmatrix}s\\n \end{bmatrix} = \begin{bmatrix}rs\\rn +ms \end{bmatrix},$$
for all $r,s \in A$ and $m,n \in M$, which is called \emph{idealization of the $A$-bimodule $M$} (or \emph{trivial extension of $A$ by $M$}). It is easy to see that $0_{A\oplus M} = \begin{bmatrix}0_A\\0_M \end{bmatrix}$ and $1_{A\oplus M} = \begin{bmatrix}1_A\\0_M \end{bmatrix}.$

For any $A$-bimodule $M$, we use the notation $C_{M}(A) = \{ m \in M: mr=rm, \; \forall r \in A\}$ and $C_{A}(M) = \{ r \in A: rm = mr, \; \forall m \in M\}$.

\subsection{Derivations from an algebra to a bimodule}

Recall that an $R$-linear map $D$ from an $R$-algebra $A$ to an $A$-bimodule $M$ is called a \emph{derivation} if $D(ab)=D(a)b+aD(b)$ for all $a,b\in A$. The set $\Der(A,M)$ of the derivations from $A$ to $M$ is an $R$-module under the operations
$$(D_1+D_2)(a)=D_1(a)+D_2(a) \ \text{ and } \ (rD_1)(a)=rD_1(a)$$
for all $D_1,D_2\in \Der(A,M)$, $r\in R$ and $a\in A$. For each $m\in M$, let $D_m: A\to M$ be defined by $D_m(a)=ma-am$ for all $a\in A$. Then $D_m\in \Der(A,M)$ and it is called the \emph{inner derivation defined by $m$}. It is easy to see that
$$D_m+D_n=D_{m+n} \ \text{ and } \ rD_m=D_{rm}$$
for $m,n\in M$ and $r\in R$, thus the set $\IDer(A,M)=\{ D_m : m\in M\}$ is a submodule of $\Der(A,M)$.

In the particular case where $M=A$ we use the simpler notation $\Der(A)$ and $\IDer(A)$ for $\Der(A,A)$ and $\IDer(A,A)$, respectively. It is well known that $\Der(A)$ is a Lie $R$-algebra with the commutator product $[D_1,D_2]=D_1\circ D_2-D_2\circ D_1$, and $\IDer(A)$ is an ideal of $\Der(A)$.

\subsection{$\Der(\FI,\I)$ and $\Der(\FI)$}

Recall that $\delta \in \FI$ given by $\delta(x,y)=0$, if $x\neq y$ and $\delta(x,x)=1$ for all $x\in X$, is the unity of $\FI$. For any $x\leq y$ in $X$, we denote by $e_{xy}$ the element of $\FI$ such that $e_{xy}(x,y)=1$ and $e_{xy}(u,v)=0$, if $(u,v)\neq (x,y)$. We will also write $e_x$ for $e_{xx}$. It is easy to verify that
\begin{equation}\label{eie}
    e_{x}ie_{y}=i(x,y)e_{xy}, \text{ for all } x\leq y \text{ and } i\in \I.
\end{equation}

An element $\tau\in \I$ such that $\tau(x,y)+\tau(y,z)=\tau(x,z)$ whenever $x \leq y \leq z$, determines a derivation $L_{\tau}:\FI\to \FI$ by $L_{\tau}(f)(x,y)=\tau(x,y)f(x,y)$, for all $f \in \FI$ and $x,y \in X$. Such derivations are called \emph{additive} and we will denote the $R$-submodule of $\Der(\FI)$ formed by all additive derivations by $\ADer(\FI)$. By \cite[Theorem~3.2]{DW},
\begin{equation} \label{eqDerDec}
     \Der(\FI,\I) = \IDer(\FI,\I)+\ADer(\FI).
\end{equation}

If $D\in \Der(\FI)$, then we also consider $D\in \Der(\FI,\I)$. Thus, there are $i,\tau\in\I$, with $\tau$ additive, such that $D=D_i+L_{\tau}$. Since $L_{\tau}\in \Der(\FI)$, then $D_i=D-L_{\tau}\in \Der(\FI)$. By \cite[Proposition~6]{DW}, $i\in \FI$ and therefore $D_i\in \IDer(\FI)$. So we have
\begin{equation} \label{eqDerDecFI}
     \Der(\FI) = \IDer(\FI)+\ADer(\FI)
\end{equation}
(which also follows from \cite[Theorem~5]{KD} in the case where $K=A$.)

The next proposition follows immediately from \eqref{eqDerDec} and \eqref{eqDerDecFI}.

\begin{proposition}\label{der-inn-iff}
One has $\Der(\FI,\I)=\IDer(\FI,\I)$ if and only if $\Der(\FI)=\IDer(\FI)$.
\end{proposition}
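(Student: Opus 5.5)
Both implications reduce to the same question, namely whether every additive derivation $L_\tau$ is inner, and I will show this using \eqref{eqDerDec} and \eqref{eqDerDecFI}. The only extra ingredient is \cite[Proposition~6]{DW}, already used above: if $i\in\I$ and $D_i$ maps $\FI$ into $\FI$, then $i\in\FI$.

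\emph{($\Rightarrow$)} Suppose $\Der(\FI,\I)=\IDer(\FI,\I)$, and let $D\in\Der(\FI)$.
\begin{enumerate}[(i)]
\item Regard $D$ as an element of $\Der(\FI,\I)$; the hypothesis then gives $D=D_i$ for some $i\in\I$.
\item Since $D_i=D$ takes values in $\FI$, \cite[Proposition~6]{DW} yields $i\in\FI$.
\item Hence $D\in\IDer(\FI)$.
\end{enumerate}
This direction does not even need \eqref{eqDerDecFI}.

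\emph{($\Leftarrow$)} Suppose $\Der(\FI)=\IDer(\FI)$, and let $D\in\Der(\FI,\I)$.
\begin{enumerate}[(i)]
\item By \eqref{eqDerDec}, write $D=D_i+L_\tau$ with $i\in\I$ and $\tau$ additive.
\item Since $L_\tau\in\ADer(\FI)\subseteq\Der(\FI)=\IDer(\FI)$, there is $f\in\FI$ with $L_\tau=D_f$.
\item Here $D_f$ is the inner derivation of $\FI$. Because $\FI\subseteq\I$ and the bimodule action of $\FI$ on $\I$ restricts to multiplication in $\FI$, it is also the inner derivation into $\I$ defined by $f$.
\item Therefore $D=D_i+D_f=D_{i+f}\in\IDer(\FI,\I)$.
\end{enumerate}

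The only point requiring care is the compatibility used in step (iii) of ($\Leftarrow$): the $\FI$-bimodule structure on $\I$ must extend the multiplication of $\FI$, so that inner derivations of $\FI$ are inner derivations into $\I$ and the identification $\Der(\FI)\subseteq\Der(\FI,\I)$ is consistent. This holds because the bimodule action on $\I$ is given by the same convolution formula (see \cite{KN}). Beyond this, both directions are direct consequences of the cited results, and I expect no real obstacle.
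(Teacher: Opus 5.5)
Your proof is correct and is essentially the paper's argument. The paper only says the result follows immediately from \eqref{eqDerDec} and \eqref{eqDerDecFI}; your ($\Leftarrow$) is exactly that, and your ($\Rightarrow$) makes explicit the appeal to \cite[Proposition~6]{DW}, which the paper used to derive \eqref{eqDerDecFI} and which its ``immediately'' tacitly relies on.
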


An element $x_0\in X$ is called \emph{all-comparable}, if for any $x\in X$, $x_0\leq x$ or $x\leq x_0$. Clearly, if $X$ has an all-comparable element, then $X$ is connected. The existence of an all-comparable element in $X$ is sufficient (but not necessary) for $\ADer(\FI)\subseteq \IDer(\FI)$, by \cite[Proposition~4.1]{ER22} (and the paragraph immediately following it). More generally, we have the following result.

\begin{proposition}\label{all-comparable-component}
Let $X=\bigsqcup_{j\in J}X_j$ be the decomposition of a poset $X$ into its connected components. Suppose that for each $j\in J$, $X_j$ is a poset with an all-comparable element. Then $\ADer(\FI)\subseteq\IDer(\FI)$.
\end{proposition}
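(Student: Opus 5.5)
Given an additive $\tau\in\I$, the plan is to write down an explicit $h\in\FI$ with $L_\tau=D_h$, gluing together the construction that works on each connected component. First we need the action of an inner derivation determined by a diagonal element. If $h\in\FI$ is diagonal, meaning $h(x,y)=0$ for $x\neq y$, then for $f\in\FI$ and $x\le y$,
$$D_h(f)(x,y)=(fh-hf)(x,y)=f(x,y)h(y,y)-h(x,x)f(x,y)=\bigl(h(y,y)-h(x,x)\bigr)f(x,y).$$
So it suffices to find $h$ with $h(y,y)-h(x,x)=\tau(x,y)$ for all $x\le y$. Such an $h$ lies in $\FI$, since a diagonal function has no nonzero values on subintervals $[u,v]$ with $u\neq v$.

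For each $j\in J$, fix an all-comparable element $x_j\in X_j$ (the axiom of choice is used if $J$ is infinite). For $x\in X_j$, define
$$h(x,x)=\begin{cases}\tau(x_j,x), & x_j\le x,\\ -\tau(x,x_j), & x\le x_j,\end{cases}$$
and set $h(x,y)=0$ for $x\ne y$. First check that $h$ is well defined when both cases apply, that is, when $x=x_j$. Additivity applied to $x_j\le x_j\le x_j$ gives $\tau(x_j,x_j)=0$, so both formulas give $0$.

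Next verify $h(y,y)-h(x,x)=\tau(x,y)$ for $x\le y$. The two points lie in the same component $X_j$ because they are comparable. There are three cases, according to where $x_j$ sits relative to $x$ and $y$.
\begin{itemize}
\item If $x_j\le x\le y$, additivity gives $\tau(x_j,y)=\tau(x_j,x)+\tau(x,y)$.
\item If $x\le y\le x_j$, additivity gives $\tau(x,x_j)=\tau(x,y)+\tau(y,x_j)$, so $-\tau(y,x_j)+\tau(x,x_j)=\tau(x,y)$.
\item If $x\le x_j\le y$, additivity gives $\tau(x,y)=\tau(x,x_j)+\tau(x_j,y)$, which is exactly $h(y,y)-h(x,x)$.
\end{itemize}
The remaining configuration, with $x_j\le x$ and $y\le x_j$, would force $x_j\le x\le y\le x_j$, hence $x=y=x_j$, and then both sides are $0$. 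Since $x_j$ is comparable to both $x$ and $y$, these cases are exhaustive.

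It follows that $D_h(f)(x,y)=\tau(x,y)f(x,y)=L_\tau(f)(x,y)$ for all $x\le y$. For $x\not\le y$, both $D_h(f)$ and $L_\tau(f)$ lie in $\FI$ and therefore vanish at $(x,y)$. Hence $L_\tau=D_h\in\IDer(\FI)$, which proves the proposition.

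The only points needing care are the well-definedness of $h$ at $x_j$ and the exhaustive case check. Both are routine, so there is no genuine obstacle. Alternatively, one could invoke \cite[Proposition~4.1]{ER22} componentwise and assemble the resulting diagonal elements, but the direct construction above avoids having to check what form that result provides.
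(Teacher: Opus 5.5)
Your proof is correct and is essentially the paper's argument. Your $h$ is exactly the paper's diagonal element $f$, built from a chosen all-comparable $x_j$ in each component; the only difference is that you verify $h(y,y)-h(x,x)=\tau(x,y)$ directly by case analysis, whereas the paper cites the proof of \cite[Proposition~4.1]{ER22} componentwise and then glues. One minor point: the paper's convention is $D_m(a)=ma-am$, and under it your computation actually gives $D_h=-L_\tau$, so strictly $L_\tau=D_{-h}$ (the paper's own proof has the same sign slip); this of course still lies in $\IDer(\FI)$.
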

\begin{proof}
For any $g\in \I$ we denote $g_j=g|_{X_j\times X_j}$ for every $j\in J$. Let $x_j$ be an all-comparable element of $X_j$, for each $j\in X_j$. Let $\tau\in \I$ be additive and define $f_j\in I(X_j,R)$ by
$$f_j(x,y) = \begin{cases}
-\tau(x,x_j) & \text{if } x = y\leq x_j\\
\tau(x_j,x) & \text{if } x_j \leq x= y\\
0 &\text{if } x\neq y
\end{cases}$$
for each $j\in J$. Obviously, $f_j \in FI(X_j,R)$. From the proof of \cite[Proposition~4.1]{ER22} it follows that $L_{\tau_j}=D_{f_j}$ for all $j\in J$. Let $f\in \FI$ be defined by $f(x,y)=f_j(x,y)$ for $x,y\in X_j$. If $g\in \FI$ and $x\leq y$ in $X$ are such that $x,y\in X_j$, then
\begin{align*}
L_{\tau}(g)(x,y) & = \tau(x,y)g(x,y)=\tau_j(x,y)g_j(x,y)=L_{\tau_j}(g_j)(x,y)=D_{f_j}(g_j)(x,y)\\
                 & = (f_jg_j-g_jf_j)(x,y)=(fg-gf)_j(x,y)=D_{f}(g)(x,y).
\end{align*}
Thus, $L_{\tau}=D_f$ and therefore $\ADer(\FI)\subseteq\IDer(\FI)$.
\end{proof}

\begin{corollary}\label{all-comparable-component-der=inner}
Let $X=\bigsqcup_{j\in J}X_j$ be the decomposition of a poset $X$ into its connected components. Suppose that for each $j\in J$, $X_j$ is a poset with an all-comparable element. Then $\Der(\FI,\I) = \IDer(\FI,\I)$ and $\Der(\FI) = \IDer(\FI)$.
\end{corollary}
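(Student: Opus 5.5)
The plan is to combine Proposition~\ref{all-comparable-component} with the decompositions \eqref{eqDerDecFI} and \eqref{eqDerDec}, and then to pass between the two algebras using Proposition~\ref{der-inn-iff}.

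First, I will handle $\Der(\FI)$. By \eqref{eqDerDecFI}, every $D\in\Der(\FI)$ can be written as $D=D_f+L_\tau$ with $f\in\FI$ and $\tau\in\I$ additive. The hypothesis on the connected components is exactly that of Proposition~\ref{all-comparable-component}, which gives $L_\tau=D_g$ for some $g\in\FI$. Since $D_f+D_g=D_{f+g}$, it follows that $D\in\IDer(\FI)$. This proves $\Der(\FI)\subseteq\IDer(\FI)$; the reverse inclusion is trivial, so $\Der(\FI)=\IDer(\FI)$.

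Second, I will obtain the statement for $\Der(\FI,\I)$ directly from Proposition~\ref{der-inn-iff}: since $\Der(\FI)=\IDer(\FI)$, we get $\Der(\FI,\I)=\IDer(\FI,\I)$.

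Alternatively, one can argue directly from \eqref{eqDerDec}. Write $D=D_i+L_\tau$ and note $L_\tau=D_g$ with $g\in\FI\subseteq\I$. Here the inner derivation of $\FI$ defined by $g$ coincides with the inner derivation into $\I$ defined by $g$, because the bimodule action of $\FI$ on $\I$ restricts to convolution on $\FI$. Hence $D=D_{i+g}$.

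I do not expect a real obstacle. The only point needing care is exactly this compatibility: an element of $\IDer(\FI)$ is also an element of $\IDer(\FI,\I)$, because $\FI\subseteq\I$ as bimodules.
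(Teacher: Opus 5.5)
Your proposal is correct and matches the paper's (implicit) argument. The paper states this corollary without proof, as an immediate consequence of Proposition~\ref{all-comparable-component} ($\ADer(\FI)\subseteq\IDer(\FI)$) combined with the decompositions \eqref{eqDerDecFI} and \eqref{eqDerDec}, or equivalently with Proposition~\ref{der-inn-iff}, which is exactly the route you take.
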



\section{Derivations of $\D$}\label{sec:derivations}

In this section, we provide a decomposition of $\Der(\D)$ as a direct sum of four $R$-submodules. Furthermore, under certain hypotheses on $X$ or $R$, we show that such a decomposition can be simplified.

\begin{proposition}\label{TeoEquivalencia}
Let
\begin{equation}\label{matrix_de_D}
D=
\begin{bmatrix}
d_{11} & d_{12}\\
d_{21} & d_{22}
\end{bmatrix}
\in\End_R(\D).
\end{equation}
Then $D\in\Der(\D)$ if and only if the following conditions hold:
\begin{enumerate}
\item[(i)] $d_{11}\in\Der(\FI)$;
\item[(ii)] $d_{21}\in\Der(\FI,\I)$;
\item[(iii)] $d_{12}$ is an $\FI$-bimodule homomorphism that satisfies
\begin{equation}\label{eq.d12}
d_{12}(i)j+i\,d_{12}(j)=0, \ \forall\, i,j\in\I;
\end{equation}
\item[(iv)] for any $f\in\FI$ and $i\in\I$,
\begin{equation}\label{eq.d22}
d_{22}(fi)=d_{11}(f)i+fd_{22}(i) \ \text{ and } \ d_{22}(if)=d_{22}(i)f+id_{11}(f).
\end{equation}
\end{enumerate}
\end{proposition}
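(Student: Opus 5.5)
Write a general element of $\D$ as $\begin{bmatrix} f\\ i\end{bmatrix}$ with $f\in\FI$, $i\in\I$. The plan is to expand the Leibniz identity $D(ab)=D(a)b+aD(b)$ for two such elements and compare components. Since $D$ is $R$-linear and every element splits as $\begin{bmatrix} f\\0\end{bmatrix}+\begin{bmatrix}0\\ i\end{bmatrix}$, and both sides of the Leibniz identity are $R$-bilinear in $(a,b)$, $D$ is a derivation if and only if the identity holds on the four kinds of pairs $(a,b)$: (algebra, algebra), (algebra, module), (module, algebra) and (module, module). I will work out each case separately, using the multiplication rule $\begin{bmatrix}r\\m\end{bmatrix}\begin{bmatrix}s\\n\end{bmatrix}=\begin{bmatrix}rs\\rn+ms\end{bmatrix}$.

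\emph{Case $a=\begin{bmatrix}f\\0\end{bmatrix}$, $b=\begin{bmatrix}g\\0\end{bmatrix}$.} Here $ab=\begin{bmatrix}fg\\0\end{bmatrix}$, so the left side is $\begin{bmatrix}d_{11}(fg)\\ d_{21}(fg)\end{bmatrix}$. The right side is
$$\begin{bmatrix}d_{11}(f)\\ d_{21}(f)\end{bmatrix}\begin{bmatrix}g\\0\end{bmatrix}+\begin{bmatrix}f\\0\end{bmatrix}\begin{bmatrix}d_{11}(g)\\ d_{21}(g)\end{bmatrix}=\begin{bmatrix}d_{11}(f)g+fd_{11}(g)\\ d_{21}(f)g+fd_{21}(g)\end{bmatrix}.$$
Comparing components gives exactly (i) and (ii).

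\emph{Case $a=\begin{bmatrix}f\\0\end{bmatrix}$, $b=\begin{bmatrix}0\\i\end{bmatrix}$.} Here $ab=\begin{bmatrix}0\\fi\end{bmatrix}$, so the left side is $\begin{bmatrix}d_{12}(fi)\\ d_{22}(fi)\end{bmatrix}$. The right side is
$$\begin{bmatrix}d_{11}(f)\\ d_{21}(f)\end{bmatrix}\begin{bmatrix}0\\i\end{bmatrix}+\begin{bmatrix}f\\0\end{bmatrix}\begin{bmatrix}d_{12}(i)\\ d_{22}(i)\end{bmatrix}=\begin{bmatrix}fd_{12}(i)\\ d_{11}(f)i+fd_{22}(i)\end{bmatrix}.$$
The first component says $d_{12}$ is a left $\FI$-module map, and the second is the first identity in \eqref{eq.d22}.

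\emph{Case $a=\begin{bmatrix}0\\i\end{bmatrix}$, $b=\begin{bmatrix}f\\0\end{bmatrix}$.} By the symmetric computation, the first component says $d_{12}$ is a right $\FI$-module map, and the second is the second identity in \eqref{eq.d22}.

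\emph{Case $a=\begin{bmatrix}0\\i\end{bmatrix}$, $b=\begin{bmatrix}0\\j\end{bmatrix}$.} Here $ab=0$, so the left side is $0$. The right side is
$$\begin{bmatrix}d_{12}(i)\\ d_{22}(i)\end{bmatrix}\begin{bmatrix}0\\j\end{bmatrix}+\begin{bmatrix}0\\i\end{bmatrix}\begin{bmatrix}d_{12}(j)\\ d_{22}(j)\end{bmatrix}=\begin{bmatrix}0\\ d_{12}(i)j+i\,d_{12}(j)\end{bmatrix},$$
which gives \eqref{eq.d12}.

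Collecting the four cases, the identities obtained are exactly conditions (i)--(iv). This proves both directions at once: necessity by specialising the Leibniz identity to these pairs, and sufficiency by bilinearity. There is no real obstacle here. The only point needing care is tracking the order of the module actions in each product, and noting that the bimodule-homomorphism property of $d_{12}$ comes from the first components of the two mixed cases.
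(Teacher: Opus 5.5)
Your proof is correct. Reducing to the four types of pairs is valid because $(a,b)\mapsto D(ab)-D(a)b-aD(b)$ is biadditive, and each of your component computations checks out, including the order of the module actions.

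The paper proves sufficiency with the same computation, done in one stroke on general elements $\begin{bmatrix} f\\ i\end{bmatrix},\begin{bmatrix} g\\ j\end{bmatrix}$; that expansion is just the sum of your four cases. For necessity, however, the paper does no computation and cites Ghosseiri's Theorem 2.1 on derivations of trivial extensions.

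Your route gives a self-contained proof in which both directions come from one set of identities, with necessity obtained by specializing the Leibniz rule to pairs drawn from the two summands. The paper's route is shorter on the page but depends on the external reference. Your argument is also as general as the cited one: it never uses anything specific to $\FI$ or $\I$, so it proves the statement for an arbitrary trivial extension $A\oplus M$, as the paper's later remark asserts.
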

\begin{proof}
Assume that conditions \textit{(i)--(iv)} hold. For any
$\begin{bmatrix}
f \\ i
\end{bmatrix},
\begin{bmatrix}
g \\ j
\end{bmatrix}
\in\D$ we have
\begin{align*}
& D\left(\begin{bmatrix}
f \\ i
\end{bmatrix}\right)
\begin{bmatrix}
g \\ j
\end{bmatrix}
+
\begin{bmatrix}
f \\ i
\end{bmatrix}
D\left(\begin{bmatrix}
g \\ j
\end{bmatrix}\right)\\
 & \quad = \begin{bmatrix}
d_{11}(f)+d_{12}(i)\\
d_{21}(f)+d_{22}(i)
\end{bmatrix}
\begin{bmatrix}
g \\ j
\end{bmatrix}
+
\begin{bmatrix}
f \\ i
\end{bmatrix}
\begin{bmatrix}
d_{11}(g)+d_{12}(j)\\
d_{21}(g)+d_{22}(j)
\end{bmatrix}\\
& \quad =
\begin{bmatrix}
d_{11}(f)g+d_{12}(i)g+fd_{11}(g)+fd_{12}(j)\\
d_{11}(f)j+d_{12}(i)j+d_{21}(f)g+d_{22}(i)g+fd_{21}(g)+fd_{22}(j)+id_{11}(g)+id_{12}(j)
\end{bmatrix}\\
& \quad =
\begin{bmatrix}
d_{11}(f)g+fd_{11}(g)+fd_{12}(j)+d_{12}(i)g\\
d_{21}(f)g+fd_{21}(g)+d_{11}(f)j+fd_{22}(j)+d_{22}(i)g+id_{11}(g)+d_{12}(i)j+id_{12}(j)
\end{bmatrix}\\
& \quad =
\begin{bmatrix}
d_{11}(fg)+d_{12}(fj+ig)\\
d_{21}(fg)+d_{22}(fj+ig)
\end{bmatrix}\\
& \quad =
\begin{bmatrix}
d_{11} & d_{12}\\
d_{21} & d_{22}
\end{bmatrix}
\begin{bmatrix}
fg\\
fj+ig
\end{bmatrix}\\
& \quad = D\left(
\begin{bmatrix}
f \\ i
\end{bmatrix}
\begin{bmatrix}
g \\ j
\end{bmatrix}
\right),
\end{align*}
which shows that $D\in\Der(\D)$.

The converse follows directly from \cite[Theorem~2.1]{Ghosseiri}.
\end{proof}

\begin{remark}\label{fiel}
We observe that $\I$ is a faithful $\FI$-bimodule. Indeed, if $f\in \FI$ is such that $fi=0$ (resp.~$if=0$) for all $i\in \I$, then $f=f\delta=0$ (resp.~$f=\delta f=0$).
\end{remark}

\begin{corollary}\label{Coro.Equivalencia.Der}
Let
$$ D=
\begin{bmatrix}
d_{11} & d_{12}\\
d_{21} & d_{22}
\end{bmatrix}
\in\End_R(\D).$$
Then $D\in\Der(\D)$ if and only if conditions (ii), (iii), and (iv) of Proposition~\ref{TeoEquivalencia} hold.
\end{corollary}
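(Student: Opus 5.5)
The aim is to show that condition \textit{(i)} of Proposition~\ref{TeoEquivalencia} is redundant: it follows from \textit{(iv)} together with the faithfulness of $\I$ noted in Remark~\ref{fiel}. Once this is shown, the corollary follows at once from Proposition~\ref{TeoEquivalencia}. If $D\in\Der(\D)$, then all of \textit{(i)}--\textit{(iv)} hold, and in particular \textit{(ii)}--\textit{(iv)} hold. Conversely, if \textit{(ii)}--\textit{(iv)} hold, we only need to derive \textit{(i)}.

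So assume \textit{(iv)} holds, and let $f,g\in\FI$ and $i\in\I$. We compute $d_{22}((fg)i)$ in two ways. Applying the first identity in \eqref{eq.d22} directly to the product $fg$ gives
\begin{equation*}
d_{22}((fg)i)=d_{11}(fg)i+fg\,d_{22}(i).
\end{equation*}
On the other hand, write $(fg)i=f(gi)$ and apply the same identity twice. This gives
\begin{equation*}
d_{22}(f(gi))=d_{11}(f)gi+f\,d_{22}(gi)=d_{11}(f)gi+f\,d_{11}(g)i+fg\,d_{22}(i).
\end{equation*}
Subtracting the two expressions yields
\begin{equation*}
\bigl(d_{11}(fg)-d_{11}(f)g-f\,d_{11}(g)\bigr)i=0 \quad \text{for all } i\in\I.
\end{equation*}
Here we use that $d_{11}$ takes values in $\FI$ and that $\I$ is an $\FI$-bimodule, so these products make sense and associativity holds.

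By Remark~\ref{fiel}, the element $d_{11}(fg)-d_{11}(f)g-fd_{11}(g)$ of $\FI$ annihilates all of $\I$ on the left, and hence it is zero. Concretely, taking $i=\delta\in\I$ already suffices. Since $d_{11}$ is $R$-linear by assumption, it follows that $d_{11}\in\Der(\FI)$, which is condition \textit{(i)}.

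There is no real obstacle in this argument. The only point needing care is to justify that multiplying $\FI$ on the left of $\I$ is associative, i.e.\ $(fg)i=f(gi)$. This holds because $\I$ is an $\FI$-bimodule (see~\cite{KN}).
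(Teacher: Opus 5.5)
Your proposal is correct and takes essentially the same approach as the paper. It computes $d_{22}(fgi)$ in two ways using (iv), then uses the faithfulness of $\I$ from Remark~\ref{fiel} to recover (i), and your observation that taking $i=\delta$ suffices is exactly how that remark is proved.
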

\begin{proof}
Let $f,g\in\FI$ and $i\in \I$. If Proposition~\ref{TeoEquivalencia} \textit{(iv)} holds, then
$$d_{22}(fgi)=d_{11}(fg)i+fgd_{22}(i)$$
and also
$$d_{22}(fgi)=d_{11}(f)gi+fd_{22}(gi)=d_{11}(f)gi+fd_{11}(g)i+fgd_{22}(i).$$
Hence
$$d_{11}(fg)i=d_{11}(f)gi+fd_{11}(g)i$$
for all $i\in \I$. Thus, by Remark~\ref{fiel}, we have
$$d_{11}(fg)=d_{11}(f)g+fd_{11}(g)$$
for all $f,g\in\FI$, that is, $d_{11}\in\Der(\FI)$. Now, the result follows directly from Proposition~\ref{TeoEquivalencia}.
\end{proof}

\begin{remark}
Let $A$ be an $R$-algebra, $M$ an $A$-bimodule and consider the idealization $A\oplus M$. Clearly, Proposition~\ref{TeoEquivalencia} holds for $A$, $M$ and $A\oplus M$ instead of $\FI$, $\I$ and $\D$, respectively. Moreover, Corollary~\ref{Coro.Equivalencia.Der} is also true whenever $M$ is a faithful $A$-module.
\end{remark}

Let $D\in\Der(\D)$ be as in \eqref{matrix_de_D}. Then $d_{11}$ and $d_{21}$ are known by \eqref{eqDerDec} and \eqref{eqDerDecFI}. So, let us describe $d_{12}$ and $d_{22}$ more precisely. Note that, by Proposition~\ref{TeoEquivalencia}, $D$ admits a decomposition into the sum of three derivations of the form
\begin{equation}\label{eqDec1}
D=
\begin{bmatrix}
d_{11} & 0\\
0 & d_{22}
\end{bmatrix}+
\begin{bmatrix}
0 & 0\\
d_{21} & 0
\end{bmatrix}\\
+
\begin{bmatrix}
0 & d_{12}\\
0 & 0
\end{bmatrix}.
\end{equation}

The next result follows directly from Corollary~\ref{Coro.Equivalencia.Der}.

\begin{corollary}\label{propBimodule}
Let $\Phi=
\begin{bmatrix}
0 & 0\\
0 & \phi
\end{bmatrix}\in\End_R(\D)$. Then $\Phi\in\Der(\D)$ if and only if $\phi$ is an $\FI$-bimodule homomorphism.
\end{corollary}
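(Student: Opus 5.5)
The plan is to apply Corollary~\ref{Coro.Equivalencia.Der} to the matrix with $d_{11}=0$, $d_{12}=0$, $d_{21}=0$ and $d_{22}=\phi$. By that corollary, $\Phi\in\Der(\D)$ holds exactly when conditions (ii), (iii) and (iv) of Proposition~\ref{TeoEquivalencia} are satisfied. It then remains to check what each of these conditions says for this particular choice of entries.

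Condition (ii) asks that $d_{21}=0$ lie in $\Der(\FI,\I)$, which holds trivially since the zero map is a derivation. Condition (iii) asks that $d_{12}=0$ be an $\FI$-bimodule homomorphism satisfying \eqref{eq.d12}. The zero map is a bimodule homomorphism, and \eqref{eq.d12} becomes $0\cdot j+i\cdot 0=0$, which is automatic.

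The only substantive condition is (iv). With $d_{11}=0$ and $d_{22}=\phi$, the identities \eqref{eq.d22} become
$$\phi(fi)=f\phi(i) \quad\text{and}\quad \phi(if)=\phi(i)f \quad \text{for all } f\in\FI,\ i\in\I.$$
Since $\phi$ is already $R$-linear, these say precisely that $\phi$ is an $\FI$-bimodule homomorphism. Hence $\Phi\in\Der(\D)$ if and only if $\phi$ is an $\FI$-bimodule homomorphism.

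There is no real obstacle here. The only point needing care is that (i) of Proposition~\ref{TeoEquivalencia} is already removed by Corollary~\ref{Coro.Equivalencia.Der}. In any case, (i) holds trivially for $d_{11}=0$, so one could equally apply Proposition~\ref{TeoEquivalencia} directly.
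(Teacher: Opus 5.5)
Your proposal is correct and matches the paper's approach: the paper says only that the result "follows directly from Corollary~\ref{Coro.Equivalencia.Der}". Your check of conditions (ii)--(iv) with $d_{11}=d_{12}=d_{21}=0$ and $d_{22}=\phi$ is exactly the argument that remark leaves implicit.
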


\begin{proposition}\label{propDbarUnique}
Each $d\in\Der(\FI)$ extends to a unique $\overline d \in\End_R(\I)$ satisfying
\begin{equation}\label{eqDbarra}
\overline d(fi)=d(f)i+f\overline d(i) \ \text{ and } \ \overline d(if)=\overline d(i)f+id(f),
\end{equation}
for all $f\in\FI$ and $i\in\I$.
\end{proposition}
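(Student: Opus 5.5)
We prove existence and uniqueness separately, using the decomposition \eqref{eqDerDecFI}.

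For existence, write $d = D_g + L_\tau$ with $g\in\FI$ and $\tau\in\I$ additive. The natural candidate is
$$\overline d(i) = gi - ig + \tau\ast i,$$
where $(\tau\ast i)(x,y)=\tau(x,y)i(x,y)$ is the pointwise product. This map is clearly $R$-linear on $\I$.

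The inner part $i\mapsto gi-ig$ satisfies \eqref{eqDbarra} with $D_g$ in place of $d$. This is the usual commutator computation, and it only needs associativity of the bimodule actions of $\FI$ on $\I$.

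For the additive part, I will check that $\overline{L_\tau}(i)=\tau\ast i$ satisfies
$$\tau\ast(fi) = (\tau\ast f)i + f(\tau\ast i).$$
Evaluate at $(x,y)$ and expand $(fi)(x,y)=\sum_{x\le z\le y} f(x,z)i(z,y)$. Then use $\tau(x,y)=\tau(x,z)+\tau(z,y)$ term by term. The right-hand identity follows in the same way. All sums involved are the finite sums (or finitary ones) already used to define the bimodule structure, so no convergence issue arises.

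One subtlety is that the decomposition $d=D_g+L_\tau$ is not unique. I will avoid any well-definedness problem by proving existence for one chosen decomposition and letting uniqueness settle the rest. Alternatively, one can note that the extension is additive in $d$, and that $\overline d$ restricted to $\FI$ equals $d$, because $\FI$ sits inside $\I$ compatibly with the products.

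For uniqueness, suppose $\overline d_1$ and $\overline d_2$ both satisfy \eqref{eqDbarra}. Their difference $\phi$ is then an $\FI$-bimodule endomorphism of $\I$. It remains to show $\phi$ is zero; this is the main point to settle.

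The statement says ``extends'', so I read it as requiring $\overline d|_{\FI}=d$. Under that reading, $\phi(\delta)=0$. For any $i\in\I$ and $x\le y$, \eqref{eie} gives
$$e_x\phi(i)e_y=\phi(e_xie_y)=i(x,y)\,\phi(e_{xy}),$$
and
$$\phi(e_{xy})=\phi(e_{xy}\delta)=e_{xy}\phi(\delta)=0.$$
Hence $\phi(i)(x,y)=0$ for every $x\le y$, so $\phi=0$.

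Without the restriction to $\FI$, uniqueness fails, since adding any central bimodule map such as $i\mapsto ri$ preserves \eqref{eqDbarra}. The main obstacle is therefore interpretive: I must use ``extends'' to mean $\overline d$ agrees with $d$ on $\FI\subseteq\I$, and verify that the constructed $\overline d$ does agree with $d$ there. The latter holds because on $\FI$ the actions reduce to the convolution product.
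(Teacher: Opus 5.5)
Your proposal is correct and follows essentially the same route as the paper. Existence goes through the decomposition $d=D_h+L_\tau$ with the natural extensions $i\mapsto hi-ih$ and $i\mapsto \tau(x,y)i(x,y)$, and uniqueness comes from sandwiching with $e_x,e_y$ via \eqref{eie} and using that any extension agrees with $d$ on $e_{xy}$; the only difference is cosmetic, since you show the difference of two extensions is a bimodule map vanishing on $\FI$, while the paper writes an explicit formula for $\mathfrak d(i)(x,y)$ depending only on $d$.
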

\begin{proof}
Let $d\in\Der(\FI)$. By \eqref{eqDerDecFI}, there exist $h\in\FI$ and an additive element $\tau\in\I$ such that $d=D_h+L_\tau$. We can naturally extend $D_h$ and $L_\tau$ to $\I$ by defining $\overline{D_h}(i)=hi-ih$ for all $i\in \I$, and $\overline{L_{\tau}}(i)(x,y)=\tau(x,y)i(x,y)$ for all $i\in \I$ and $x,y\in X$. Direct computations show that $\overline{D_h},\overline{L_{\tau}}\in \End_R(\I)$. Let $f\in\FI$ and $i\in\I$. We have
\begin{align}
D_h(f)i+f\overline{D_h}(i) & = (hf-fh)i+f(hi-ih)=h(fi)-(fi)h=\overline{D_h}(fi) \label{condicao_overlineDh1}\\
\overline{D_h}(i)f+iD_h(f) & = (hi-ih)f+i(hf-fh)=h(if)-(if)h=\overline{D_h}(if). \label{condicao_overlineDh2}
\end{align}
And for any $x,y\in X$,
\begin{align*}
(L_\tau(f)i+f\overline{L_\tau}(i))(x,y) & = (L_\tau(f)i)(x,y)+(f\overline{L_\tau}(i))(x,y)\\
& = \sum_{x\le z\le y} \tau(x,z)f(x,z)i(z,y) + \sum_{x\le z\le y} f(x,z)\tau(z,y)i(z,y)\\
& = \sum_{x\le z\le y} (\tau(x,z)+\tau(z,y))f(x,z)i(z,y)\\
& = \sum_{x\le z\le y} \tau(x,y)f(x,z)i(z,y) \\
& = \tau(x,y)(fi)(x,y)\\
& = \overline{L_\tau}(fi)(x,y).
\end{align*}
Therefore,
\begin{equation}\label{condicao_overlineLtau1}
\overline{L_\tau}(fi)=L_\tau(f)i+f\overline{L_\tau}(i).
\end{equation}
The identity
\begin{equation}\label{condicao_overlineLtau2}
\overline{L_\tau}(if)=\overline{L_\tau}(i)f+iL_\tau(f)
\end{equation}
is proved analogously.

Define $\overline d=\overline{D_h}+\overline{L_\tau}$. Clearly, $\overline d(f)=d(f)$ for all $f\in \FI$, and $\overline d\in \End_R(\I)$. Moreover, $\overline d$ and satisfies \eqref{eqDbarra} by \eqref{condicao_overlineDh1}, \eqref{condicao_overlineDh2}, \eqref{condicao_overlineLtau1}, and \eqref{condicao_overlineLtau2}.

To prove the uniqueness, let $\mathfrak d \in\End_R(\I)$ be an extension of $d$ that satisfies
$$\mathfrak d(fi)=d(f)i+f\mathfrak d(i) \ \text{ and } \ \mathfrak d(if)=\mathfrak d(i)f+id(f),$$
for all $f\in\FI$ and $i\in\I$. For any $i\in\I$ and $x\le y$, it follows by \eqref{eie} that
\begin{align*}
i(x,y)d(e_{xy}) & = i(x,y)\mathfrak d(e_{xy})=\mathfrak d(i(x,y)e_{xy})=\mathfrak d(e_xie_y)\\
                & = d(e_x)ie_y+e_x\mathfrak d(ie_y)=d(e_x)ie_y+e_x\mathfrak d(i)e_y+e_xid(e_y)\\
                & = d(e_x)ie_y+\mathfrak d(i)(x,y)e_{xy}+e_xid(e_y),
\end{align*}
thus
$$\mathfrak d(i)(x,y)=i(x,y)d(e_{xy})(x,y)-(d(e_x)i)(x,y)-(id(e_y))(x,y).$$
Since this last equality depends only on $d$, it follows that $\mathfrak d=\overline d$.
\end{proof}

\begin{proposition}\label{d_22-d_11}
If $\begin{bmatrix}
d_{11} & 0\\
0 & d_{22}
\end{bmatrix}\in \Der(\D)$, then $d_{22}-\overline{d_{11}}:\I\to\I$ is an $\FI$-bimodule homomorphism.
\end{proposition}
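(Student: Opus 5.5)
By Proposition~\ref{TeoEquivalencia} applied to the diagonal derivation $\begin{bmatrix} d_{11} & 0\\ 0 & d_{22}\end{bmatrix}$, condition \textit{(i)} gives $d_{11}\in\Der(\FI)$. Condition \textit{(iv)} gives the identities \eqref{eq.d22} for $d_{22}$. Since $d_{11}\in\Der(\FI)$, Proposition~\ref{propDbarUnique} supplies the map $\overline{d_{11}}\in\End_R(\I)$ satisfying \eqref{eqDbarra} with $d=d_{11}$. The plan is simply to subtract these two sets of identities and observe that the terms involving $d_{11}$ cancel.

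Concretely, set $\phi=d_{22}-\overline{d_{11}}$. This lies in $\End_R(\I)$, being a difference of $R$-linear maps. For $f\in\FI$ and $i\in\I$, the first identity of \eqref{eq.d22} gives
$$d_{22}(fi)=d_{11}(f)i+fd_{22}(i),$$
and the first identity of \eqref{eqDbarra} gives
$$\overline{d_{11}}(fi)=d_{11}(f)i+f\overline{d_{11}}(i).$$
Subtracting, we get $\phi(fi)=f\phi(i)$. In the same way, the second identities yield $\phi(if)=\phi(i)f$. Hence $\phi$ is a left and right $\FI$-module map, that is, an $\FI$-bimodule homomorphism.

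There is no real obstacle. The only point requiring care is to confirm that $\overline{d_{11}}$ is available, which needs $d_{11}\in\Der(\FI)$. This comes from Proposition~\ref{TeoEquivalencia}\textit{(i)}, or alternatively from Corollary~\ref{Coro.Equivalencia.Der}, whose proof derives it from \textit{(iv)} via the faithfulness noted in Remark~\ref{fiel}. The uniqueness part of Proposition~\ref{propDbarUnique} is not needed here.
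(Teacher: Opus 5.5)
Your proof is correct and is the paper's argument: set $\phi=d_{22}-\overline{d_{11}}$, subtract \eqref{eqDbarra} from \eqref{eq.d22}, and the $d_{11}$-terms cancel, giving $\phi(fi)=f\phi(i)$ and $\phi(if)=\phi(i)f$. You also make explicit a point the paper leaves implicit: $d_{11}\in\Der(\FI)$, which is needed for $\overline{d_{11}}$ to exist, comes from Proposition~\ref{TeoEquivalencia}\textit{(i)}.
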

\begin{proof}
Since $d_{22},\overline{d_{11}}\in\End_R(\I)$, then $d_{22}-\overline{d_{11}}\in \End_R(\I)$. Let $f\in\FI$ and $i\in\I$. Applying \eqref{eq.d22} and \eqref{eqDbarra}, we obtain
\begin{align*}
(d_{22}-\overline{d_{11}})(fi) & = d_{11}(f)i+fd_{22}(i)-d_{11}(f)i-f\overline{d_{11}}(i)=f(d_{22}-\overline{d_{11}})(i),\\
(d_{22}-\overline{d_{11}})(if) & = d_{22}(i)f+id_{11}(f)-\overline{d_{11}}(i)f-id_{11}(f)=(d_{22}-\overline{d_{11}})(i)f.
\end{align*}
\end{proof}

Let $D=\begin{bmatrix}
d_{11} & d_{12}\\
d_{21} & d_{22}
\end{bmatrix}\in\Der(\D)$. By Corollary~\ref{propBimodule} and Propositions~\ref{propDbarUnique} and \ref{d_22-d_11}, $D$ admits a decomposition into the sum of four derivations since we can rewrite \eqref{eqDec1} as
\begin{equation}\label{eqDec2}
D=
\begin{bmatrix}
d_{11} & 0\\
0 & \overline{d_{11}}
\end{bmatrix}+
\begin{bmatrix}
0 & 0\\
0 & d_{22}-\overline{d_{11}}
\end{bmatrix}+
\begin{bmatrix}
0 & 0\\
d_{21} & 0
\end{bmatrix}\\
+
\begin{bmatrix}
0 & d_{12}\\
0 & 0
\end{bmatrix}.
\end{equation}
The maps $d_{12}$ and $d_{22}-\overline{d_{11}}$ are $\FI$-bimodule homomorphisms. Therefore, it is interesting to describe all the $\FI$-bimodule homomorphisms from $\I$ to $\I$. To do this, we recall that $Z(\FI)$ is the set of all diagonal functions which are constant on each connected component of $X$, and
\begin{equation} \label{eqcenralizadores}
    C_{\FI}(\I)= C_{\I}(\FI) = Z(\FI),
\end{equation}
by \cite[Proposition~2]{DW}. In particular, if $X$ is connected, then $\FI$ is a central algebra.

For each $z\in Z(\FI)$, we denote by $z\cdot:\I\to \I$ the multiplication by $z$. When $z=r\delta$ with $r\in R$, we just write $r\cdot$ instead of $(r\delta)\cdot$. Clearly, $z\cdot\in \End_R(\I)$ for all $z\in Z(\FI)$.

\begin{theorem}\label{bimodule_hom}
Let $\phi\in\End_R(\I)$. Then $\phi$ is an $\FI$-bimodule homomorphism if and only if $\phi=z\cdot$ for some $z\in Z(\FI)$. Furthermore, in this case, $z=\phi(\delta)$.
\end{theorem}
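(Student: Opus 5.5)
The plan is to prove both directions directly, using the centralizer identity \eqref{eqcenralizadores} for the forward direction and the localization formula \eqref{eie} to pass from $\FI$ to all of $\I$.

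For the converse direction, let $z\in Z(\FI)$. Then $z\cdot$ is $R$-linear, and for $f\in\FI$ and $i\in\I$ associativity of the bimodule actions gives $z(if)=(zi)f$. Since $z$ is central, we also get $z(fi)=(zf)i=(fz)i=f(zi)$. Hence $z\cdot$ is an $\FI$-bimodule homomorphism, and $(z\cdot)(\delta)=z$. This last equality also gives the ``furthermore'' part once the forward direction is established.

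For the forward direction, let $\phi$ be a bimodule homomorphism and put $z=\phi(\delta)\in\I$, which makes sense because $\delta\in\FI\subseteq\I$. For every $f\in\FI$ we have
$$f z=f\phi(\delta)=\phi(f\delta)=\phi(f)=\phi(\delta f)=\phi(\delta)f=zf.$$
Thus $z\in C_{\I}(\FI)=Z(\FI)$ by \eqref{eqcenralizadores}. The same computation shows $\phi(f)=fz=zf$ for all $f\in\FI$, in particular $\phi(e_{xy})=z\,e_{xy}$.

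The main obstacle is that a general $i\in\I$ need not lie in $\FI$, so we cannot write $i=f\delta$ with $f\in\FI$. I would handle this entrywise using \eqref{eie}. For $x\le y$ and $i\in\I$,
$$\phi(i)(x,y)\,e_{xy}=e_x\phi(i)e_y=\phi(e_xie_y)=\phi(i(x,y)e_{xy})=i(x,y)\,z\,e_{xy}=i(x,y)z(x,x)\,e_{xy}.$$
The last equality uses that $z$ is diagonal. It follows that $\phi(i)(x,y)=z(x,x)i(x,y)=(zi)(x,y)$ for all $x\le y$. For $x\nleq y$ both sides vanish. Hence $\phi=z\cdot$ with $z=\phi(\delta)$.
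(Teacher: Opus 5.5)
Your proof is correct and follows the paper's argument essentially step for step. You prove the converse via centrality of $z$, deduce $\phi(\delta)\in C_{\I}(\FI)=Z(\FI)$, and finish with the entrywise computation $e_x\phi(i)e_y=\phi(e_xie_y)=i(x,y)\phi(e_{xy})$. The only cosmetic difference is that you write $\phi(e_{xy})=z\,e_{xy}$ and read off $z(x,x)$, whereas the paper writes $\phi(e_{xy})=e_{xy}\phi(\delta)$, reads off $\phi(\delta)(y,y)$, and then uses centrality to pass from $i\phi(\delta)$ to $\phi(\delta)i$.
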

\begin{proof}
If $\phi=z\cdot$ for some $z\in Z(\FI)$, then $\phi(\delta)=z\delta=z$. Moreover, for any $f\in\FI$ and $i\in\I$,
\begin{align*}
\phi(fi) & = z(fi)=(zf)i=(fz)i=f(zi)=f\phi(i),\\
\phi(if) & = z(if)=(zi)f=\phi(i)f.
\end{align*}
Thus, $\phi$ is an $\FI$-bimodule homomorphism.

Conversely, if $\phi$ is an $\FI$-bimodule homomorphism, then $\phi(\delta)f=\phi(\delta f)=\phi(f)=\phi(f\delta)=f\phi(\delta)$
for all $f\in \FI$, and so $\phi(\delta)\in Z(\FI)$. Let $i\in\I$ and $x\le y$ in $X$. By \eqref{eie} we have
\begin{align*}
\phi(i)(x,y) & =(e_x\phi(i)e_y)(x,y)=\phi(e_xie_y)(x,y)=\phi(i(x,y)e_{xy})(x,y)=i(x,y)\phi(e_{xy})(x,y)\\
&=i(x,y)(e_{xy}\phi(\delta))(x,y)=i(x,y)\phi(\delta)(y,y)=(i\phi(\delta))(x,y)\\
&=(\phi(\delta)i)(x,y).
\end{align*}
Therefore,
$$\phi(i)=\phi(\delta)i=(\phi(\delta)\cdot)(i),$$
for every $i\in\I$, and hence $\phi=\phi(\delta)\cdot$.
\end{proof}

\begin{remark}\label{d_12andd_22-d_11}
Let $D\in\Der(\D)$ be as in \eqref{eqDec2}. By Theorem~\ref{bimodule_hom}, there exist $z,z_1\in Z(\FI)$ such that $d_{12}=z\cdot$ and $d_{22}-\overline{d_{11}}=z_1\cdot$. Moreover, since $d_{12}(\I)\subseteq \FI$, we have
\begin{equation}\label{condicao1z}
zi\in\FI, \ \forall i\in\I,
\end{equation}
and by Proposition~\ref{TeoEquivalencia},
\begin{equation}\label{condicao2zpre}
(zi)j+i(zj)=0, \ \forall i,j\in\I.
\end{equation}
However, \eqref{condicao1z} implies that $(zi)j=(iz)j=i(zj)$ for all $i,j\in\I$, therefore
\begin{equation}\label{condicao2z}
2i(zj)=0, \ \forall i,j\in\I.
\end{equation}
\end{remark}

Let $Z_0$ be the subset of $Z(\FI)$ formed by the elements $z$ that satisfy \eqref{condicao1z} and \eqref{condicao2z}. For each $x\in X$ we denote by $[x]$ the connected component of $X$ that contains $x$.

\begin{proposition}\label{propcaracterizaçãoZ0}
Let $z\in Z(\FI)$. Then $z\in Z_0$ if and only if
\begin{align}
z(x,x)=0, & \quad \text{if } [x] \text{ is not locally finite}; \label{eq[x]1}\\
2z(x,x)=0, & \quad \text{if } [x] \text{ is locally finite}. \label{eq[x]2}
\end{align}
\end{proposition}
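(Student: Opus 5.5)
Since $z\in Z(\FI)$ is diagonal and constant on each connected component, write $z(x,x)=c_{[x]}$. Then for any $i\in\I$ we have $(zi)(x,y)=c_{[x]}\,i(x,y)$ (note $x\le y$ forces $[x]=[y]$). So $zi$ is just $i$ with its values on each component $X_j$ scaled by $c_j$. The plan is to translate \eqref{condicao1z} and \eqref{condicao2z} into conditions on the constants $c_j$, one component at a time.

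For \eqref{condicao2z}, the aim is to show it is equivalent to $2c_j=0$ for every component $X_j$. If $2c_j=0$ for all $j$, then $(i(zj))(x,y)=\sum_{x\le w\le y} i(x,w)c_{[w]}j(w,y)=c_{[x]}(ij)(x,y)$, since all $w$ in the sum lie in $[x]$. Hence $2i(zj)=2z(ij)=0$. Conversely, taking $i=j=\delta$ gives $2z=0$, that is, $2c_j=0$ for all $j$. Thus \eqref{condicao2z} holds if and only if $2z(x,x)=0$ for all $x$, which already gives \eqref{eq[x]2}, and also one half of what is needed when $[x]$ is not locally finite.

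For \eqref{condicao1z}, the aim is to show it is equivalent to $c_j=0$ for every non-locally-finite component $X_j$. If $X_j$ is locally finite, every $g\in\I$ restricted to $X_j\times X_j$ satisfies the finitary condition automatically, because each interval has only finitely many subintervals. Combined with $c_j=0$ on the other components, this shows $zi\in\FI$ for every $i$: the finitary condition is checked interval by interval, and each interval $[x,y]$ lies inside the single component $[x]$. Conversely, suppose $X_j$ is not locally finite, so some interval $[x,y]\subseteq X_j$ is infinite. Take $i=\zeta$, the function equal to $1$ on all pairs $u\le v$. Then $(zi)(u,v)=c_j$ for all $u\le v$ in $[x,y]$. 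The interval $[x,y]$ contains infinitely many subintervals $[x,w]$ with $x\ne w$, namely one for each $w\in[x,y]\setminus\{x\}$, so $zi\in\FI$ forces $c_j=0$.

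Combining the two equivalences gives the statement: $z\in Z_0$ if and only if $c_{[x]}=0$ when $[x]$ is not locally finite and $2c_{[x]}=0$ when $[x]$ is locally finite. The only step needing some care is the converse direction for \eqref{condicao1z}. There the main point is to choose the test element $i=\zeta$ and to check that the infinite interval really does produce infinitely many nondegenerate subintervals on which $zi$ is nonzero. The remaining steps are direct computations.
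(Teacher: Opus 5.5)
Your argument is correct and is essentially the paper's proof. You use the same test element $\zeta$ (the paper's $\Delta$) on an infinite interval to force $z(x,x)=0$, the choice $i=j=\delta$ to get $2z=0$, and, for the converse, the same interval-by-interval check using that $z$ vanishes on non-locally-finite components.

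One small repair is needed. For $i,j\in\I$ the product $ij$ is not defined in general when $X$ is not locally finite, and $i(zj)$ only makes sense once $zj\in\FI$ is known. So do not pass through $z(ij)$; instead write $2\,i(zj)=i\bigl((2z)j\bigr)=0$, after first establishing \eqref{condicao1z}.
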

\begin{proof}
Suppose that $z\in Z_0$. Consider the element $\Delta \in \I$ such that $\Delta(x,y)=1$ for all $x\leq y$ in $X$. If $[x]$ contains an infinite interval $[u,v]$ and $u<t<v$, then
$$(z\Delta)(u,t)=z(u,u)\Delta(u,t)=z(u,u).$$
On the other hand, since $u\in [x]$ and $z \in Z(\FI)$, then $z(u,u)=z(x,x)$. It follows that $(z\Delta)(u,t)=z(x,x)$ for $u<t<v$, which implies $z(x,x)=0$ as $z\Delta\in FI(X,R)$, by \eqref{condicao1z}. Therefore, \eqref{eq[x]1} holds. Now, if we apply \eqref{condicao2z} to $i=j=\delta$, we get $2z=0$, from which follows \eqref{eq[x]2}.

Conversely, suppose that $z$ satisfies \eqref{eq[x]1} and \eqref{eq[x]2}. Let $i\in I(X,R)$. If $[u,v]$ is an infinite interval of $X$ and $u\leq t< w \leq v$, then $[u,v]\subseteq [t]$. Hence, by \eqref{eq[x]1} we have
$$(zi)(t,w)=z(t,t)i(t,w)=0\cdot i(t,w)=0.$$
Therefore, $zi\in FI(X,R)$ and \eqref{condicao1z} holds. Since $z$ is diagonal, \eqref{eq[x]1} and \eqref{eq[x]2} imply that $2z=0$ and so $2i(zj)=0$ for all $i,j\in\I$. Thus, \eqref{condicao2z} is also true, and so $z \in Z_0$.
\end{proof}

Now, consider the following subsets of $\End_R(\D)$:
$$\widetilde{\mathcal D}=\left\{\begin{bmatrix}
d & 0\\
0 & \overline{d}
\end{bmatrix} : d\in\Der(\FI)\right\},$$
$$\mathcal Z_{22}=\left\{\begin{bmatrix}
0 & 0\\
0 & z\cdot
\end{bmatrix} : z\in Z(\FI)\right\},$$
$$\mathcal D_{21}=\left\{\begin{bmatrix}
0 & 0\\
d & 0
\end{bmatrix} : d\in\Der(\FI,\I)\right\},$$
$$\mathcal Z^{0}_{12}=\left\{\begin{bmatrix}
0 & z\cdot\\
0 & 0
\end{bmatrix} : z\in Z_0 \right\}.$$
By Corollary~\ref{Coro.Equivalencia.Der}, Corollary~\ref{propBimodule}, Proposition~\ref{propDbarUnique}, Theorem~\ref{bimodule_hom}, and Proposition~\ref{propcaracterizaçãoZ0}, $\widetilde{\mathcal D}$, $\mathcal Z_{22}$, $\mathcal D_{21}$, and $\mathcal Z^{0}_{12}$ are subsets of $\Der(\D)$.

\begin{proposition}\label{submodules_of_Der(D)}
    The subsets $\widetilde{\mathcal D}$, $\mathcal Z_{22}$, $\mathcal D_{21}$, and $\mathcal Z^{0}_{12}$ are $R$-submodules of $\Der(\D)$.
\end{proposition}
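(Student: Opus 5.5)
Since all four sets are already known to lie in $\Der(\D)$, itself an $R$-submodule of $\End_R(\D)$, it suffices to show that each set contains $0$ and is closed under addition and multiplication by scalars $r\in R$. In each case I will exhibit the relevant set as the image of an $R$-linear map into $\End_R(\D)$ whose domain is an $R$-module. The image of an $R$-linear map is a submodule, and this settles the claim at once.

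For $\mathcal D_{21}$, the map $d\mapsto\begin{bmatrix}0&0\\ d&0\end{bmatrix}$ from the $R$-module $\Der(\FI,\I)$ is plainly $R$-linear. For $\mathcal Z_{22}$, I use $z\mapsto\begin{bmatrix}0&0\\0&z\cdot\end{bmatrix}$ on $Z(\FI)$. Here $Z(\FI)$ is an $R$-submodule of $\FI$ (the center of an $R$-algebra), and $(z+z')\cdot=z\cdot+z'\cdot$, $(rz)\cdot=r(z\cdot)$ follow from distributivity and $R$-bilinearity of the bimodule action. For $\mathcal Z^0_{12}$, the same map restricted to $Z_0$ works once I check that $Z_0$ is an $R$-submodule of $Z(\FI)$. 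By Proposition~\ref{propcaracterizaçãoZ0}, $Z_0$ is cut out by the conditions \eqref{eq[x]1} and \eqref{eq[x]2}, each of which is an $R$-linear condition on the values $z(x,x)$. Alternatively, \eqref{condicao1z} and \eqref{condicao2z} are visibly preserved under sums and $R$-multiples, because $\FI$ is a submodule of $\I$.

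The only step needing real care is $\widetilde{\mathcal D}$, where the map is $d\mapsto\begin{bmatrix}d&0\\0&\overline d\end{bmatrix}$ on $\Der(\FI)$. Its $R$-linearity amounts to $\overline{d+d'}=\overline d+\overline{d'}$ and $\overline{rd}=r\overline d$. I will get this from the uniqueness part of Proposition~\ref{propDbarUnique}. First, $\overline d+\overline{d'}$ lies in $\End_R(\I)$ and extends $d+d'$. Second, adding the identities \eqref{eqDbarra} for $d$ and for $d'$ gives $(\overline d+\overline{d'})(fi)=(d+d')(f)i+f(\overline d+\overline{d'})(i)$, together with the analogous right-hand identity. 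Uniqueness then forces $\overline{d+d'}=\overline d+\overline{d'}$. The scalar case is the same, using that $R$ is commutative and acts centrally on $\I$, so that $r(d(f)i)=(rd)(f)i$ and $r(f\overline d(i))=f(r\overline d)(i)$.
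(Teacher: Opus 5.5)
Your proposal is correct and takes the same approach as the paper. Each set is the image of an $R$-linear assignment from an $R$-module ($\Der(\FI,\I)$, $Z(\FI)$, or $Z_0$), and the only nontrivial point is $\overline{d_1+d_2}=\overline{d_1}+\overline{d_2}$ and $\overline{rd_1}=r\overline{d_1}$. The paper simply asserts these identities, and your derivation of them from the uniqueness part of Proposition~\ref{propDbarUnique} correctly fills in that step. One small wording point: for $\mathcal Z^0_{12}$ you mean the map putting $z\cdot$ in the $(1,2)$ entry, not literally the map used for $\mathcal Z_{22}$.
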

\begin{proof}
It is obvious that  $\mathcal D_{21}$ is a submodule of $\Der(\D)$.

Observe that $\overline{d_1+d_2}=\overline{d_1}+\overline{d_2}$ and $r\overline{d_1}=\overline{rd_1}$ for all $d_1,d_2\in\Der(\FI)$ and $r\in R$. Therefore, $\widetilde{\mathcal D}$ is an $R$-submodule of $\Der(\D)$.

Since $Z(\FI)$ is an $R$-subalgebra of $\FI$, then $\mathcal Z_{22}$ is an $R$-submodule of $\Der(\D)$.

By Proposition~\ref{propcaracterizaçãoZ0}, $Z_0$ is an $R$-subalgebra of $Z(\FI)$, then $\mathcal Z^{0}_{12}$ is an $R$-submodule of $\Der(\D)$.
\end{proof}

\begin{theorem}\label{TeoDecDer}
One has
$$\Der(\D)=\widetilde{\mathcal D}\oplus \mathcal D_{21}\oplus \mathcal Z^{0}_{12} \oplus \mathcal Z_{22}.$$
\end{theorem}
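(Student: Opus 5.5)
We prove two things: every derivation of $\D$ is a sum of elements from the four submodules, and this sum is unique. The submodules are already known to lie in $\Der(\D)$ by Proposition~\ref{submodules_of_Der(D)}.

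\emph{Spanning.} Let $D=\begin{bmatrix} d_{11} & d_{12}\\ d_{21} & d_{22}\end{bmatrix}\in\Der(\D)$ and use the decomposition \eqref{eqDec2}.
\begin{enumerate}
\item[(a)] By Proposition~\ref{TeoEquivalencia}(i), $d_{11}\in\Der(\FI)$. Proposition~\ref{propDbarUnique} then provides $\overline{d_{11}}$, so the first summand of \eqref{eqDec2} lies in $\widetilde{\mathcal D}$.
\item[(b)] By Proposition~\ref{d_22-d_11}, $d_{22}-\overline{d_{11}}$ is an $\FI$-bimodule homomorphism. Theorem~\ref{bimodule_hom} gives $d_{22}-\overline{d_{11}}=z_1\cdot$ with $z_1\in Z(\FI)$, so the second summand lies in $\mathcal Z_{22}$.
\item[(c)] By Proposition~\ref{TeoEquivalencia}(ii), $d_{21}\in\Der(\FI,\I)$, so the third summand lies in $\mathcal D_{21}$.
\item[(d)] By Proposition~\ref{TeoEquivalencia}(iii) and Remark~\ref{d_12andd_22-d_11}, $d_{12}=z\cdot$ where $z$ satisfies \eqref{condicao1z} and \eqref{condicao2z}. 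Hence $z\in Z_0$, and the fourth summand lies in $\mathcal Z^0_{12}$.
\end{enumerate}
One point needs care in (a) and (b). Proposition~\ref{d_22-d_11} is stated for the diagonal matrix $\begin{bmatrix} d_{11}&0\\0&d_{22}\end{bmatrix}$, not for $D$ itself. This matrix is a derivation by Corollary~\ref{Coro.Equivalencia.Der}: condition (ii) holds trivially with the zero map, condition (iii) holds with $d_{12}=0$, and condition (iv) depends only on $d_{11}$ and $d_{22}$, so it is inherited from $D$.

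\emph{Directness.} Suppose
$$\begin{bmatrix} d & 0\\ 0&\overline d\end{bmatrix}+\begin{bmatrix} 0&0\\ d'&0\end{bmatrix}+\begin{bmatrix} 0& z\cdot\\0&0\end{bmatrix}+\begin{bmatrix}0&0\\0&z_1\cdot\end{bmatrix}=0,$$
with $d\in\Der(\FI)$, $d'\in\Der(\FI,\I)$, $z\in Z_0$ and $z_1\in Z(\FI)$. We compare the four matrix entries.
\begin{enumerate}
\item[(1)] The $(1,1)$ entry gives $d=0$, hence $\overline d=0$ by the uniqueness in Proposition~\ref{propDbarUnique}.
\item[(2)] The $(2,1)$ entry gives $d'=0$.
\item[(3)] The $(1,2)$ entry gives $z\cdot=0$. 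Applying it to $\delta$ yields $z=z\delta=0$.
\item[(4)] The $(2,2)$ entry now reads $z_1\cdot=0$. Applying it to $\delta$ yields $z_1=0$.
\end{enumerate}
So every summand vanishes, and the sum is direct.

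The only step needing real attention is the directness in the $(2,2)$ entry, where $\overline d$ and $z_1\cdot$ occupy the same position. This is resolved by reading off $d=0$ from the $(1,1)$ entry before looking at the $(2,2)$ entry, so the two never have to be separated directly.
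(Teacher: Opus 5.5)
Your proposal is correct and follows essentially the same route as the paper. Spanning comes from the decomposition \eqref{eqDec2} together with Theorem~\ref{bimodule_hom} and Remark~\ref{d_12andd_22-d_11}, and directness goes exactly as in the paper: read off $d=0$ and $d'=0$, kill $z$ by evaluating at $\delta$ (i.e., faithfulness, Remark~\ref{fiel}), then use the uniqueness in Proposition~\ref{propDbarUnique} to get $\overline{d}=0$ and hence $z_1=0$. Your extra check that $\begin{bmatrix} d_{11}&0\\0&d_{22}\end{bmatrix}$ is itself a derivation, so that Proposition~\ref{d_22-d_11} applies, makes explicit a step the paper leaves implicit.
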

\begin{proof}
By \eqref{eqDec2}, Remark~\ref{d_12andd_22-d_11}, and Proposition~\ref{submodules_of_Der(D)} we have
\begin{equation}\label{sum}
    \Der(\D)=\widetilde{\mathcal D}+\mathcal D_{21}+\mathcal Z^{0}_{12}+\mathcal Z_{22}.
\end{equation}
Let $d_1\in \Der(\FI)$, $d_2\in \Der(\FI,\I)$, $z_1,z_2\in Z(\FI)$ with $z_1$ satisfying \eqref{condicao1z} and \eqref{condicao2z} so that
$$\begin{bmatrix}
d_1 & 0\\
0 & \overline{d_1}
\end{bmatrix}+
\begin{bmatrix}
0 & 0\\
d_2 & 0
\end{bmatrix}+
\begin{bmatrix}
0 & z_1\cdot\\
0 & 0
\end{bmatrix}+
\begin{bmatrix}
0 & 0\\
0 & z_2\cdot
\end{bmatrix}=
\begin{bmatrix}
0 & 0\\
0 & 0
\end{bmatrix}.$$
If follows directly that $d_1=0$ and $d_2=0$. By Remark~\ref{fiel}, $z_1=0$. By Proposition~\ref{propDbarUnique}, $\overline{d_1}=\overline{0}=0$, which implies $z_2=0$, by Remark~\ref{fiel}. Therefore, the sum in \eqref{sum} is direct.
\end{proof}

\begin{proposition}\label{propz12=0torsion}
One has $\mathcal Z^{0}_{12}=\{0\}$ if and only if $R$ is $2$-torsion-free or every connected component of $X$ contains an infinite interval.
\end{proposition}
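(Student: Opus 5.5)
The plan is to reduce the statement to a question about $Z_0$, and then read the answer off the characterization of $Z_0$ in Proposition~\ref{propcaracterizaçãoZ0}.

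\textbf{Reduction.} The map $z\mapsto \begin{bmatrix}0 & z\cdot\\ 0 & 0\end{bmatrix}$ from $Z_0$ onto $\mathcal Z^{0}_{12}$ is injective. Indeed, $(z\cdot)(\delta)=z$, which is also the faithfulness argument of Remark~\ref{fiel}. Hence $\mathcal Z^{0}_{12}=\{0\}$ if and only if $Z_0=\{0\}$. Recall also that $Z(\FI)$ consists exactly of the diagonal functions that are constant on each connected component. Finally, a connected component contains an infinite interval precisely when it is not locally finite.

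\textbf{Sufficiency.} Let $z\in Z_0$; we show $z(x,x)=0$ for every $x\in X$, so that $z=0$ because $z$ is diagonal. If every connected component contains an infinite interval, then every $[x]$ fails to be locally finite, and \eqref{eq[x]1} gives $z(x,x)=0$ directly. If instead $R$ is $2$-torsion-free, we split into two cases. On non-locally-finite components we again use \eqref{eq[x]1}. On locally finite components, \eqref{eq[x]2} gives $2z(x,x)=0$, and $2$-torsion-freeness then forces $z(x,x)=0$.

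\textbf{Necessity (contrapositive).} Assume $R$ is not $2$-torsion-free and some connected component $C$ of $X$ is locally finite. Choose $0\neq r\in R$ with $2r=0$. Define $z\in\I$ by
\begin{itemize}
\item $z(x,x)=r$ for $x\in C$,
\item $z(x,x)=0$ for $x\notin C$,
\item $z(x,y)=0$ for $x\neq y$.
\end{itemize}
We check the required properties in order.
\begin{enumerate}
\item $z$ is diagonal, so it lies in $\FI$.
\item $z$ is constant on each connected component, so $z\in Z(\FI)$.
\item $z$ satisfies \eqref{eq[x]1}, since it vanishes off the locally finite component $C$.
\item $z$ satisfies \eqref{eq[x]2}, since $2r=0$.
\end{enumerate}
By Proposition~\ref{propcaracterizaçãoZ0}, $z\in Z_0$. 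Since $z\neq 0$, the reduction gives $\mathcal Z^{0}_{12}\neq\{0\}$.

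There is no real obstacle here. The only points needing care are the injectivity of $z\mapsto z\cdot$ and the translation between "contains an infinite interval" and "not locally finite" for a connected component. Both are immediate from the definitions and the earlier results.
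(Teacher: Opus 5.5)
Your proof is correct and follows the paper's argument essentially step for step. You reduce to $Z_0=\{0\}$ via faithfulness, since $(z\cdot)(\delta)=z$. You get sufficiency directly from the characterization of $Z_0$ in terms of locally finite and non-locally-finite components. For necessity you use the same diagonal element, equal to $r$ (with $r\neq 0$, $2r=0$) on a locally finite component and $0$ elsewhere.
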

\begin{proof}
We first observe that $\mathcal Z^{0}_{12}=\{0\}$ if and only if $Z_0=\{0\}$, by Remark~\ref{fiel}.

If $R$ is $2$-torsion-free or every connected component of $X$ contains an infinite interval, then $Z_0=\{0\}$ by Proposition~\ref{propcaracterizaçãoZ0}.

Conversely, suppose $R$ is not $2$-torsion-free and there exists a connected component $Y$ of $X$ that is locally finite. Let $r\in R$, $r\neq 0$, such that $2r=0$. Define $z\in \I$ by
$$z(x,y)=
\begin{cases}
r, & x=y\in Y,\\
0, & \text{otherwise}.
\end{cases}$$
Note that $z\in Z(FI(X,R))$ and $z\neq 0$. Let $x\in X$. If $x\notin Y$, then $z(x,x)=0$ while if $x\in Y$, then $2z(x,x)=2r=0$. Thus $z\in Z_0$, by Proposition~\ref{propcaracterizaçãoZ0}. Therefore, $Z_0\neq \{0\}$.
\end{proof}

The next result follows directly from Theorem~\ref{TeoDecDer} and Proposition~\ref{propz12=0torsion}.

\begin{corollary}\label{simpler}
One has
$$\Der(\D)=\widetilde{\mathcal D}\oplus \mathcal D_{21} \oplus \mathcal Z_{22}$$
if and only if $R$ is $2$-torsion-free or every connected component of $X$ contains an infinite interval.
\end{corollary}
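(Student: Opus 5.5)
The plan is to derive Corollary~\ref{simpler} directly from Theorem~\ref{TeoDecDer} and Proposition~\ref{propz12=0torsion}. The key point is that, inside the direct sum $\Der(\D)=\widetilde{\mathcal D}\oplus \mathcal D_{21}\oplus \mathcal Z^{0}_{12} \oplus \mathcal Z_{22}$, the equality $\Der(\D)=\widetilde{\mathcal D}\oplus \mathcal D_{21}\oplus\mathcal Z_{22}$ holds exactly when $\mathcal Z^{0}_{12}=\{0\}$.

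For the sufficiency, assume $R$ is $2$-torsion-free or every connected component of $X$ contains an infinite interval. Proposition~\ref{propz12=0torsion} then gives $\mathcal Z^{0}_{12}=\{0\}$. Substituting this into Theorem~\ref{TeoDecDer} yields $\Der(\D)=\widetilde{\mathcal D}\oplus \mathcal D_{21}\oplus \mathcal Z_{22}$. This sum is still direct, since it is a sub-sum of a direct sum.

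For the necessity, assume $\Der(\D)=\widetilde{\mathcal D}\oplus \mathcal D_{21}\oplus \mathcal Z_{22}$, and take any $D\in\mathcal Z^{0}_{12}\subseteq\Der(\D)$. Then $D$ can be written as an element of $\widetilde{\mathcal D}+\mathcal D_{21}+\mathcal Z_{22}$, and also trivially as $D\in\mathcal Z^0_{12}$. Because the four-term sum in Theorem~\ref{TeoDecDer} is direct, the decomposition is unique, so $D=0$. A more concrete way to see this is that every element of $\widetilde{\mathcal D}+\mathcal D_{21}+\mathcal Z_{22}$ has zero $(1,2)$-entry, while $D$ has only a $(1,2)$-entry. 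Hence $\mathcal Z^{0}_{12}=\{0\}$, and Proposition~\ref{propz12=0torsion} gives the stated condition on $R$ and $X$.

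No step is a genuine obstacle. The only care needed is in the converse direction, where we must use that the decomposition in Theorem~\ref{TeoDecDer} is direct, so that equality of the smaller sum with $\Der(\D)$ forces $\mathcal Z^{0}_{12}$ to vanish.
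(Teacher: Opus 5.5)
Your proposal is correct and takes the same route as the paper, which states that this corollary follows directly from Theorem~\ref{TeoDecDer} and Proposition~\ref{propz12=0torsion}. Your observation that every element of $\widetilde{\mathcal D}+\mathcal D_{21}+\mathcal Z_{22}$ has zero $(1,2)$-entry is exactly the detail the paper leaves implicit, and it is what makes the ``only if'' direction go through.
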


We will now describe the inner derivations of $\D$. Let
$$\widetilde{\mathcal{ID}}=\left\{\begin{bmatrix}
D_h & 0\\
0 & \overline{D_h}
\end{bmatrix} : h\in \FI\right\} \ \text{ and } \
\mathcal{ID}_{21}=\left\{\begin{bmatrix}
0 & 0\\
D_l & 0
\end{bmatrix} : l\in\I\right\}.$$
Since $\IDer(\FI,\I)$ is a submodule of $\Der(\FI,\I)$ and $\IDer(\FI)$ is a submodule of $\Der(\FI)$, then $\widetilde{\mathcal ID}$ is a submodule of $\widetilde{\mathcal D}$ and $\mathcal{ID}_{21}$ is a submodule of $\mathcal{D}_{21}$.

\begin{proposition}\label{prop_inner}
One has
$$\IDer(\D)= \widetilde{\mathcal{ID}} \oplus \mathcal{ID}_{21}.$$
\end{proposition}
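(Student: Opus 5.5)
The plan is to compute the inner derivation of $\D$ defined by an arbitrary element $\begin{bmatrix} h\\ l\end{bmatrix}\in\D$, with $h\in\FI$ and $l\in\I$, directly from the multiplication of the idealization, and read off its matrix presentation. For $\begin{bmatrix} f\\ i\end{bmatrix}\in\D$ we have
\begin{align*}
\begin{bmatrix} h\\ l\end{bmatrix}\begin{bmatrix} f\\ i\end{bmatrix}-\begin{bmatrix} f\\ i\end{bmatrix}\begin{bmatrix} h\\ l\end{bmatrix}
=\begin{bmatrix} hf-fh\\ hi+lf-fl-ih\end{bmatrix}
=\begin{bmatrix} D_h(f)\\ D_l(f)+\overline{D_h}(i)\end{bmatrix},
\end{align*}
where $D_h(f)=hf-fh$ and $D_l(f)=lf-fl$ follow the paper's convention for inner derivations. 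We also use $\overline{D_h}(i)=hi-ih$, as constructed in the proof of Proposition~\ref{propDbarUnique}; by the uniqueness part of that proposition, this is the $\overline{d}$ attached to $d=D_h$.

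The paper's convention is $D_m(a)=ma-am$, whereas the commutator computed above is $\begin{bmatrix} h\\ l\end{bmatrix}\cdot -\cdot\begin{bmatrix} h\\ l\end{bmatrix}$. With the paper's convention the inner derivation defined by $\begin{bmatrix} h\\ l\end{bmatrix}$ is the negative of this, namely $\begin{bmatrix} D_{-h}&0\\0&\overline{D_{-h}}\end{bmatrix}+\begin{bmatrix}0&0\\D_{-l}&0\end{bmatrix}$. Either way, since $h$ and $l$ range over all of $\FI$ and $\I$, the inner derivation defined by $\begin{bmatrix} h\\ l\end{bmatrix}$ lies in $\widetilde{\mathcal{ID}}+\mathcal{ID}_{21}$. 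Conversely, every element of $\widetilde{\mathcal{ID}}$ is the inner derivation defined by $\begin{bmatrix} -h\\ 0\end{bmatrix}$, and every element of $\mathcal{ID}_{21}$ is the inner derivation defined by $\begin{bmatrix} 0\\ -l\end{bmatrix}$. Since $\IDer(\D)$ is closed under sums, this gives $\IDer(\D)=\widetilde{\mathcal{ID}}+\mathcal{ID}_{21}$.

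Directness is inherited from Theorem~\ref{TeoDecDer}. There $\widetilde{\mathcal D}\cap\mathcal D_{21}=\{0\}$ because the sum $\widetilde{\mathcal D}\oplus \mathcal D_{21}\oplus \mathcal Z^{0}_{12} \oplus \mathcal Z_{22}$ is direct. Since $\widetilde{\mathcal{ID}}\subseteq\widetilde{\mathcal D}$ and $\mathcal{ID}_{21}\subseteq\mathcal D_{21}$, the sum $\widetilde{\mathcal{ID}}+\mathcal{ID}_{21}$ is direct as well. One can also see this directly: a common element has zero $(1,2)$- and $(2,1)$-entries, hence zero $D_h$, and therefore zero $\overline{D_h}$ by uniqueness in Proposition~\ref{propDbarUnique}.

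There is no real obstacle here. The only points needing care are the sign convention for inner derivations and the identification of the $(2,2)$-entry $i\mapsto hi-ih$ with $\overline{D_h}$, which the uniqueness statement of Proposition~\ref{propDbarUnique} guarantees.
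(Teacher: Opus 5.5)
Your argument is correct and is essentially the paper's proof. You compute $D_{\left[\begin{smallmatrix} h\\l \end{smallmatrix}\right]}$ directly from the multiplication in the idealization and read off the matrix $\begin{bmatrix} D_h & 0\\ D_l & \overline{D_h}\end{bmatrix}$. Your appeals to the directness in Theorem~\ref{TeoDecDer} and to the uniqueness in Proposition~\ref{propDbarUnique} just make explicit what the paper leaves implicit.

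The only slip is the alleged sign discrepancy. With the convention $D_m(a)=ma-am$, the commutator $x\mapsto\left[\begin{smallmatrix} h\\l \end{smallmatrix}\right]x-x\left[\begin{smallmatrix} h\\l \end{smallmatrix}\right]$ that you computed \emph{is} $D_{\left[\begin{smallmatrix} h\\l \end{smallmatrix}\right]}$, so no sign flip occurs. Consequently the elements of $\widetilde{\mathcal{ID}}$ and $\mathcal{ID}_{21}$ are $D_{\left[\begin{smallmatrix} h\\0 \end{smallmatrix}\right]}$ and $D_{\left[\begin{smallmatrix} 0\\l \end{smallmatrix}\right]}$, not $D_{\left[\begin{smallmatrix} -h\\0 \end{smallmatrix}\right]}$ and $D_{\left[\begin{smallmatrix} 0\\-l \end{smallmatrix}\right]}$. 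This is harmless, since all the sets involved are closed under negation.
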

\begin{proof}
Let $\begin{bmatrix} h\\l \end{bmatrix}\in \D$. For all $\begin{bmatrix}f\\i \end{bmatrix}\in \D$ we have
\begin{align*}
    D_{\left[\begin{smallmatrix} h\\l \end{smallmatrix}\right]}\left(\begin{bmatrix}f\\i \end{bmatrix}\right) & = \begin{bmatrix}h\\l \end{bmatrix}\begin{bmatrix}f\\i \end{bmatrix}-\begin{bmatrix}f\\i \end{bmatrix}\begin{bmatrix}h\\l \end{bmatrix}=\begin{bmatrix} hf-fh\\hi+lf-fl-ih \end{bmatrix}\\
    & = \begin{bmatrix} D_h(f) \\ \overline{D_h}(i)+D_l(f)\end{bmatrix}=\begin{bmatrix} D_h & 0\\ D_l & \overline{D_h} \end{bmatrix}\begin{bmatrix}f\\i \end{bmatrix}.
\end{align*}
Therefore,
$$D_{\left[\begin{smallmatrix} h\\l \end{smallmatrix}\right]}=\begin{bmatrix} D_h & 0\\ D_l & \overline{D_h} \end{bmatrix}$$
and so $\IDer(\D)=\widetilde{\mathcal ID}\oplus \mathcal{ID}_{21}$.
\end{proof}

\begin{proposition}\label{allcomparable-inner}
Let $X=\bigsqcup_{j\in J}X_j$ be the decomposition of a poset $X$ into its connected components. Suppose that for each $j\in J$, $X_j$ is a poset with an all-comparable element. Then $\widetilde{\mathcal{D}}=\widetilde{\mathcal{ID}}$ and $\mathcal{D}_{21}=\mathcal{ID}_{21}$.
\end{proposition}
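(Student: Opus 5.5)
The plan is to reduce both equalities to Corollary~\ref{all-comparable-component-der=inner}, which under the present hypothesis gives $\Der(\FI)=\IDer(\FI)$ and $\Der(\FI,\I)=\IDer(\FI,\I)$. The inclusions $\widetilde{\mathcal{ID}}\subseteq\widetilde{\mathcal D}$ and $\mathcal{ID}_{21}\subseteq\mathcal D_{21}$ were already noted just before Proposition~\ref{prop_inner}, so only the reverse inclusions need proof.

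For $\mathcal D_{21}$ the argument is immediate. Take $\left[\begin{smallmatrix}0&0\\ d&0\end{smallmatrix}\right]\in\mathcal D_{21}$ with $d\in\Der(\FI,\I)$. By Corollary~\ref{all-comparable-component-der=inner}, $d=D_l$ for some $l\in\I$. Hence the element lies in $\mathcal{ID}_{21}$.

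For $\widetilde{\mathcal D}$, take $\left[\begin{smallmatrix}d&0\\0&\overline d\end{smallmatrix}\right]$ with $d\in\Der(\FI)$. By Corollary~\ref{all-comparable-component-der=inner}, $d=D_h$ for some $h\in\FI$. It then remains to check that $\overline{d}$, the extension of Proposition~\ref{propDbarUnique}, coincides with the map $\overline{D_h}\colon i\mapsto hi-ih$ used in the definition of $\widetilde{\mathcal{ID}}$.

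This matching step is the only point needing care. The notation $\overline{D_h}$ in the proof of Proposition~\ref{propDbarUnique} was tied to a particular decomposition $d=D_h+L_\tau$, and $h$ is not unique: it is determined only up to $Z(\FI)$, so one should not assume the extension was built from our specific $h$. The way around this is uniqueness. Identities \eqref{condicao_overlineDh1} and \eqref{condicao_overlineDh2} show that $i\mapsto hi-ih$ is an $R$-linear extension of $D_h=d$ satisfying \eqref{eqDbarra}. By the uniqueness part of Proposition~\ref{propDbarUnique}, it therefore equals $\overline d$, whatever decomposition was used to construct $\overline d$. Consequently $\left[\begin{smallmatrix}d&0\\0&\overline d\end{smallmatrix}\right]=\left[\begin{smallmatrix}D_h&0\\0&\overline{D_h}\end{smallmatrix}\right]\in\widetilde{\mathcal{ID}}$, which completes the proof.
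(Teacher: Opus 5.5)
Your proof is correct and takes the same route as the paper, whose proof consists only of invoking Corollary~\ref{all-comparable-component-der=inner}. Your use of the uniqueness part of Proposition~\ref{propDbarUnique} to identify $\overline{d}$ with $i\mapsto hi-ih$ fills in a detail the paper leaves implicit.
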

\begin{proof}
It follows from Corollary~\ref{all-comparable-component-der=inner}.
\end{proof}

The next result follows directly from Theorem~\ref{TeoDecDer} and Propositions~\ref{prop_inner} and \ref{allcomparable-inner}.

\begin{corollary}\label{derwithallcomparable}
Let $X=\bigsqcup_{j\in J}X_j$ be the decomposition of a poset $X$ into its connected components. Suppose that for each $j\in J$, $X_j$ is a poset with an all-comparable element. Then
$$\Der(\D)=\IDer(\D)\oplus \mathcal Z^{0}_{12} \oplus \mathcal Z_{22}=\widetilde{\mathcal{ID}} \oplus \mathcal{ID}_{21}\oplus \mathcal Z^{0}_{12} \oplus \mathcal Z_{22}.$$
\end{corollary}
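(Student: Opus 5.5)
The plan is to combine Theorem~\ref{TeoDecDer} with Propositions~\ref{prop_inner} and \ref{allcomparable-inner}. By Theorem~\ref{TeoDecDer} we have
$$\Der(\D)=\widetilde{\mathcal D}\oplus \mathcal D_{21}\oplus \mathcal Z^{0}_{12} \oplus \mathcal Z_{22}.$$
Under the hypothesis that each connected component $X_j$ has an all-comparable element, Proposition~\ref{allcomparable-inner} gives $\widetilde{\mathcal D}=\widetilde{\mathcal{ID}}$ and $\mathcal D_{21}=\mathcal{ID}_{21}$. Substituting these into the decomposition yields
$$\Der(\D)=\widetilde{\mathcal{ID}} \oplus \mathcal{ID}_{21}\oplus \mathcal Z^{0}_{12} \oplus \mathcal Z_{22},$$
which is the second equality of the statement. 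The sum stays direct because we have only replaced summands of a direct sum by equal submodules.

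For the first equality, Proposition~\ref{prop_inner} gives $\IDer(\D)=\widetilde{\mathcal{ID}} \oplus \mathcal{ID}_{21}$. So $\widetilde{\mathcal{ID}} \oplus \mathcal{ID}_{21}$ may be grouped as the single summand $\IDer(\D)$, giving $\Der(\D)=\IDer(\D)\oplus \mathcal Z^{0}_{12} \oplus \mathcal Z_{22}$. Directness again follows from associativity of internal direct sums: the subsum of the first two summands of a direct sum is itself a direct summand.

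The only point needing care is that $\widetilde{\mathcal{ID}}$ in Proposition~\ref{prop_inner} uses the same extension $\overline{D_h}$ as $\widetilde{\mathcal D}$. By uniqueness in Proposition~\ref{propDbarUnique}, the map $i\mapsto hi-ih$ is the extension $\overline{D_h}$ of $D_h$, so the two descriptions agree. With that checked, there is no real obstacle; the corollary is a direct substitution.
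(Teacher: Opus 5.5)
Your proposal is correct and follows the paper's route exactly. You substitute the equalities of Proposition~\ref{allcomparable-inner} into the direct sum of Theorem~\ref{TeoDecDer}, then regroup $\widetilde{\mathcal{ID}}\oplus\mathcal{ID}_{21}$ as $\IDer(\D)$ via Proposition~\ref{prop_inner}; your check via Proposition~\ref{propDbarUnique} that $i\mapsto hi-ih$ is the extension $\overline{D_h}$ is a sound consistency point that the paper leaves implicit.
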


From Proposition~\ref{propz12=0torsion} and Corollary~\ref{derwithallcomparable} we obtain our last result.

\begin{corollary}\label{ecc}
Let $X=\bigsqcup_{j\in J}X_j$ be the decomposition of a poset $X$ into its connected components. Suppose that for each $j\in J$, $X_j$ is a poset with an all-comparable element. If $R$ is $2$-torsion-free or $X_j$ contains an infinite interval for every $j\in J$, then
$$\Der(\D)=\IDer(\D)\oplus \mathcal Z_{22}.$$
\end{corollary}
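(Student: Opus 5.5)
This follows by combining Corollary~\ref{derwithallcomparable} with Proposition~\ref{propz12=0torsion}. Since every connected component $X_j$ has an all-comparable element, Corollary~\ref{derwithallcomparable} gives
$$\Der(\D)=\IDer(\D)\oplus \mathcal Z^{0}_{12} \oplus \mathcal Z_{22}.$$
So it suffices to show that $\mathcal Z^{0}_{12}=\{0\}$ under either of the two extra hypotheses.

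Proposition~\ref{propz12=0torsion} says exactly that $\mathcal Z^{0}_{12}=\{0\}$ if and only if $R$ is $2$-torsion-free or every connected component of $X$ contains an infinite interval. The decomposition $X=\bigsqcup_{j\in J}X_j$ lists all the connected components of $X$. Hence the hypothesis ``$X_j$ contains an infinite interval for every $j\in J$'' is precisely the second condition of that proposition. In either case we get $\mathcal Z^{0}_{12}=\{0\}$.

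Dropping this zero summand from the direct sum yields $\Der(\D)=\IDer(\D)\oplus\mathcal Z_{22}$. This is a formal consequence, because the other summands and the directness of the sum are untouched.

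There is no real obstacle here. The only point to check is that the hypothesis on the $X_j$ matches the condition on connected components in Proposition~\ref{propz12=0torsion}, and it does by the definition of the decomposition.
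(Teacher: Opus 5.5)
Your proposal is correct and is exactly the paper's argument. Corollary~\ref{derwithallcomparable} gives $\Der(\D)=\IDer(\D)\oplus \mathcal Z^{0}_{12} \oplus \mathcal Z_{22}$, and the ``if'' direction of Proposition~\ref{propz12=0torsion} (using that the $X_j$ are precisely the connected components of $X$) gives $\mathcal Z^{0}_{12}=\{0\}$ under either hypothesis, so that summand drops out.
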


\begin{example}
The set of natural numbers $\mathds N$ under divisibility is a poset such that $\mathds N=[1,0]$. Thus, $\Der(D(\mathds N,R))=\IDer(D(\mathds N,R))\oplus \mathcal Z_{22}$ for any unital commutative ring $R$, by Corollary~\ref{ecc}. Furthermore, since $(\mathds N,\mid)$ is connected, then $FI(\mathds N,R)$ is a central algebra, thus $\mathcal Z_{22}=\left\{\begin{bmatrix}
0 & 0\\
0 & r\cdot
\end{bmatrix} : r\in R\right\}.$
\end{example}

\begin{example}
Consider the poset $X$ so that its decomposition into connected components is $X=\bigsqcup_{j=1}^{\infty}X_j$ with $X_j=\mathds R$ for all $j\geq 1$, where the set of real numbers $\mathds R$ is considered with the usual order. By Corollary~\ref{ecc}, $\Der(\D))=\IDer(\D)\oplus \mathcal Z_{22}$ for any unital commutative ring $R$.
\end{example}

\begin{example}
Consider the poset $X$ so that its decomposition into connected components is $X=X_1\sqcup X_2$, with $X_1=\mathds N$ and $X_2=\mathds R$ where $\mathds N$ and $\mathds R$ are considered with the usual orders. By Corollary~\ref{derwithallcomparable}, $\Der(\D)=\IDer(\D)\oplus \mathcal Z^{0}_{12} \oplus \mathcal Z_{22}$ for all any unital commutative ring $R$. Furthermore, $\Der(\D))=\IDer(\D)\oplus \mathcal Z_{22}$ if and only if $R$ is 2-torsion-free, by Proposition~\ref{propz12=0torsion}.
\end{example}

\section*{Statements and Declarations}

The authors report there are no competing interests to declare.

\end{document}